\newcommand{\N}{\mathbb{N}}
\newcommand{\Z}{\mathbb{Z}}
\renewcommand{\epsilon}{\varepsilon}
\renewcommand{\phi}{\varphi}
\newtheorem{thm}{Theorem}[section]
\newtheorem{lemma}[thm]{Lemma}
\newtheorem{prop}[thm]{Proposition}
\newtheorem{corollary}[thm]{Corollary}
\theoremstyle{definition}
\newtheorem{defn}[thm]{Definition}
\newtheorem{example}[thm]{Example}
\newtheorem{remark}[thm]{Remark}
\begin{document}
\title{(S,\MakeLowercase{w})-Gap Shifts and Their Entropy} 
\author{Cristian Ramirez and Amy Somers}
\thanks{This work is supported by NSF grant DMS-1954463.}
\date{\today}
\address{Cristian Ramirez, University of California, Berkeley, Berkeley, CA 94720, \emph{E-mail address:} \tt{cramirez@berkeley.edu}}
\address{Amy Somers, University of California, Santa Barbara, Santa Barbara, CA 93117, \emph{E-mail address:} \tt{asomers@ucsb.edu}}
\maketitle

\begin{abstract}
The $S$-gap shifts have a dynamically and combinatorially rich structure. Dynamical properties of the $S$-gap shift can be related to the properties of the set $S$. This interplay is particularly interesting when $S$ is not syndetic such as when $S$ is the set of prime numbers or when $S=\{2^n\}$. It is a well known result that the entropy of the $S$-gap shift is given by $h(X) = \log \lambda$, where $\lambda>0$ is the unique solution to the equation $\sum_{n \in S} \lambda^{-(n+1)}=1$. Fix a point $w$ of the full shift $\{1,2, \dots, k\}^\mathbb{Z}$. We introduce the $(S,w)$-gap shift which is a generalization of the $S$-gap shift consisting of sequences in $\{0,1, \dots, k\}^\mathbb{Z}$ in which any two $0$'s are separated by a word $u$ appearing in $w$ such that $|u|\in S$. We extend the formula for the entropy of the $S$-gap shift to a formula describing the entropy of this new class of shift spaces. Additionally we investigate the dynamical properties including irreducibility and mixing of this generalization of the $S$-gap shift.
\end{abstract}

\bigskip

\section{Introduction}
Given an $S \subseteq \mathbb{Z}_{\geq 0}$, an $S$-gap shift, $X(S)$, is defined to be the shift space consisting of all sequences in $\{0,1\}^{\mathbb{Z}}$ such that any two nearest $1$'s are separated by a word $0^n$ for some $n \in S$.
The entropy of an $S$-gap shift is given by
$\log \lambda$, where $\lambda$ is the unique positive solution to the equation $\sum_{n \in S} \lambda^{-(n+1)}=1$. This formula for the entropy of an $S$-gap shift is given as an exercise in \cite[Exercise 4.3.7]{Lind-and-Marcus} and proofs of the formula are described in \cite[Corollary 3.21]{Garcia} and \cite{Blog}.

In \cite{matson}, Matson and Sattler generalize the notion of $S$-gap shifts by introducing the $\mathcal{S}$-limited shifts defined on an alphabet $\{1, \dots k\}$ by a collection of limiting sets  $\mathcal{S}=\{S_1, \dots, S_k\}$ where each $S_i \subseteq \N$ describes the allowed lengths of words in which the letter $i$ may appear.
They describe several dynamical properties of this class of shift spaces and prove a formula for the entropy of ordered $\mathcal{S}$-limited shifts. 
Dillon \cite{Dillon} further extends this 
defining the $\mathcal{S}$-graph shifts by a finite directed graph with a subset of the natural numbers assigned to each vertex.
The $\mathcal{S}$-graph shifts include all shifts of finite type and both the ordered and unordered $\mathcal{S}$-limited shifts and \cite{Dillon} computes the entropy of these shifts. These definitions generalize the $S$-gap shifts by altering the set $S$ of the allowed lengths of gaps between pairs of a symbol.

In this paper we extend the definition of $S$-gap shifts by modifying the words which separate pairs of symbols rather than the set $S$. 
Fix some $S \subseteq \Z_{\geq 0}$ and $w \in \{1,2, \dots, k\}^\Z$. 
We define an $(S,w)$-gap shift, denoted $X_w(S)$, to be 
the closure of the set of points of the form $\dots u_{-1} 0 u_0 0 u_1 \dots \in \{0,1, \dots, k\}^\Z$ where each $u_i$ is a word appearing in $w$ with $|u_i| \in S$.
As shown in section \ref{sec:S,w gap shifts}, any $(S,w)$-gap shift is irreducible and synchronized with synchronizing word 
$0$.
Additionally we show that an $(S,w)$-gap shift is mixing if and only if $\gcd\{n+1:n \in S\}=1$. 
It turns out that the entropy of an $(S,w)$-shift depends on the number of words of length $n$ appearing in $w$, we write this number $\phi_w(n)$. 
We apply Theorem \ref{thm:Garcia 3.12} (\cite[Corollary 3.12]{Garcia}) to prove the following formula for the entropy of $(S,w)$-gap shifts.
\begin{restatable}{thm}{EntropyFormula}\label{S,w Entropy}
Let $w \in \{1,2, \dots, k\}^\Z$ and $S \subseteq \Z_{\geq 0}$. Then $h(X_w(S))=\log \lambda$ where $\lambda>0$ is the unique positive solution to 
$$ 1=\sum_{n \in S}\phi_w(n)\lambda^{-(n+1)}.$$
\end{restatable}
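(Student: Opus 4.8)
The plan is to realize $X_w(S)$ as the shift space generated by an explicit prefix code whose words are the first-return words to the synchronizing symbol $0$, to count these code words by length, and then to read off the entropy from Theorem~\ref{thm:Garcia 3.12}.

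First I would introduce
\[
\mathcal{C}=\bigl\{\, u0 \;:\; u \text{ is a word occurring in } w,\ |u|\in S \,\bigr\}\subseteq\{0,1,\dots,k\}^{*}.
\]
Every element of $\mathcal{C}$ consists of a word over $\{1,\dots,k\}$ followed by a single terminal $0$, so no element of $\mathcal{C}$ is a proper prefix of another; hence $\mathcal{C}$ is a prefix code, and in particular uniquely decipherable. A point $\cdots u_{-1}0\,u_0\,0\,u_1\cdots$ of the kind described in the definition of $X_w(S)$ is literally the bi-infinite concatenation $\cdots(u_{-1}0)(u_0 0)(u_1 0)\cdots$ of words from $\mathcal{C}$, and conversely; so $X_w(S)$ is exactly the closure of the set of bi-infinite concatenations of words from $\mathcal{C}$ (equivalently, $\mathcal{C}$ is the set of first-return words to the synchronizing word $0$ of Section~\ref{sec:S,w gap shifts}). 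By the definition of $\phi_w$, the number of words of $\mathcal{C}$ of length $\ell$ equals $\phi_w(\ell-1)$ when $\ell-1\in S$ and equals $0$ otherwise.

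Next I would apply Theorem~\ref{thm:Garcia 3.12} to the code $\mathcal{C}$. It yields $h(X_w(S))=\log\lambda$, where $\lambda>0$ is the unique positive solution of $\sum_{c\in\mathcal{C}}x^{-|c|}=1$, and substituting the length count from the previous step rewrites this equation as $1=\sum_{n\in S}\phi_w(n)x^{-(n+1)}$, which is the assertion. If the existence and uniqueness of $\lambda$ is not already part of Theorem~\ref{thm:Garcia 3.12}, I would supply it as follows (assuming $S\neq\emptyset$; for $S=\emptyset$ one has $X_w(S)=\{0^{\infty}\}$, of entropy $0$). Put $f(x)=\sum_{n\in S}\phi_w(n)x^{-(n+1)}$. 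Truncation injects the length-$(m+n)$ factors of $w$ into pairs consisting of a length-$m$ and a length-$n$ factor, so $m\mapsto\log\phi_w(m)$ is subadditive; by Fekete's lemma $\rho:=\lim_m\phi_w(m)^{1/m}$ exists, is finite ($\le k$), and $\phi_w(m)\ge\rho^{m}$ for every $m$. If $S$ is finite, $f$ is continuous and strictly decreasing on $(0,\infty)$ with $f(0^{+})=\infty$ and $f(\infty)=0$, hence has a unique root. If $S$ is infinite, then for $x\le\rho$ the $n$-th term is at least $\rho^{n}x^{-(n+1)}\ge x^{-1}$ so $f\equiv\infty$ there, while for $x>\rho$ the series converges, $f$ is continuous and strictly decreasing, $f(\infty)=0$, and $f(x)\to\sum_{n\in S}\phi_w(n)\rho^{-(n+1)}\ge|S|/\rho=\infty$ as $x\downarrow\rho$; so again $f$ has a unique root, necessarily lying in $(\rho,\infty)$.

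The only real content beyond this bookkeeping is checking that the hypotheses of Theorem~\ref{thm:Garcia 3.12} genuinely apply to $\mathcal{C}$: one must make sure that the possibly \emph{infinite} code $\mathcal{C}$, which has $\phi_w(n)$ words of length $n+1$ and for which this count may grow exponentially in $n$ --- exactly the feature that makes the non-syndetic cases $S=\{2^{n}\}$ or $S=\{\text{primes}\}$ interesting --- still falls under that result, and that replacing the set of concatenations by its closure in the definition of $X_w(S)$ does not change the entropy (it does not, since a word occurs in a subshift if and only if it occurs in its closure, cylinders being clopen). Should Theorem~\ref{thm:Garcia 3.12} be stated only for finite codes or under an additional convergence hypothesis, the bound $\phi_w(n)\ge\rho^{n}$ above still pins down the root $\lambda$; the lower bound $h(X_w(S))\ge\log\lambda$ then follows from directly counting admissible words of each length, and the matching upper bound $h(X_w(S))\le\log\lambda$ follows from unique decipherability of $\mathcal{C}$ via the standard generating-function estimate on word counts. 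This matching of hypotheses is the step I expect to require the most care.
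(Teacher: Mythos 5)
Your primary route rests on a misreading of Theorem \ref{thm:Garcia 3.12}. In this paper that result is the Garc\'ia-Ramos--Pavlov extender-set statement: if $E_X(u)=E_X(v)$, then every measure of maximal entropy satisfies $\mu[v]=\mu[u]e^{h(X)(|u|-|v|)}$. It says nothing about prefix codes and does not, by itself, output ``$h=\log\lambda$ where $\sum_{c\in\mathcal{C}}\lambda^{-|c|}=1$.'' To extract the entropy formula from it one must do what the paper does: produce a measure of maximal entropy with $\mu[0]>0$ (Lemma \ref{MME with mu[0]>0} --- this is the genuinely delicate point when $S$ is infinite, since a priori an MME could be supported on the orbit closure of $w$, and ruling that out takes a separate counting argument), decompose $[0]$ into the cylinders $[0u0]$ with $u\in\mathcal{B}_n(w)$, $n\in S$, together with the sets $[0w_{[i,\infty)}]$, show the latter carry zero measure, invoke Lemma \ref{lemma:extender sets} to get $E_{X_w(S)}(0u0)=E_{X_w(S)}(0)$, and only then apply Theorem \ref{thm:Garcia 3.12} termwise and sum. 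None of these steps appears in your write-up, so as stated the main argument has a genuine gap rather than a citation quibble.

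Your fallback is a different and in principle viable strategy: realize $X_w(S)$ as the coded system generated by $\mathcal{C}=\{u0: u\in\mathcal{B}(w),\,|u|\in S\}$ (the paper makes exactly this observation), bound $|\mathcal{B}_n(X_w(S))|$ below by counting concatenations and above via unique decipherability and generating functions. This is precisely the combinatorial proof of Theorem \ref{thm:Entropy of S-gap} that the authors say ``does not generalize well'' and deliberately avoid, and your one genuinely new ingredient --- submultiplicativity of $\phi_w$, hence $\rho=\lim_m\phi_w(m)^{1/m}$ exists with $\phi_w(m)\geq\rho^m$, so the root $\lambda$ exists and lies in $(\rho,\infty)$ when $S$ is infinite --- is exactly what makes the generalization feasible, because when $S$ is infinite the language contains arbitrarily long factors of $w$ and the ``boundary'' generating series $\sum_i\phi_w(i)z^i$ must converge at $z=1/\lambda$. (Your existence/uniqueness discussion of $\lambda$ is in fact more careful than the paper's.) But the two decisive estimates are only asserted: you never write down the upper bound on $|\mathcal{B}_n(X_w(S))|$ that splits a word into a factor-of-$w$ prefix, a concatenation of code words, and a factor-of-$w$ suffix and sums the resulting series for $z<1/\lambda$, nor the injectivity of concatenation (immediate here, since $0$ occurs exactly at the end of each code word) needed for the lower bound. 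Until those are carried out --- or the argument is rerouted through the MME machinery as in the paper, or through the coded-subshift results of \cite{Pavlov} --- the proof is an outline, not a proof.
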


\section{Background}

Let $\mathcal{A}$ be a finite set of symbols (or \emph{letters}), we refer to $\mathcal{A}$ as an \emph{alphabet}.
The collection of all bi-infinite sequences of symbols in $\mathcal{A}$ is the \emph{full shift} on $\mathcal{A}$ denoted 
$$\mathcal{A}^{\mathbb{Z}} = \{x = (x_{i})_{i\in \mathbb{Z}}\ : x_{i} \in \mathcal{A}\ \text{for all}\ i\in \mathbb{Z}\}$$
which is equivalent to the standard notation of $\mathcal{A}^\Z$ for the set of all maps $\Z \to \mathcal{A}$.

Each sequence $x=(x_i)_{i \in \mathbb{Z}}  \in \mathcal{A}^{\mathbb{Z}}$ is a \emph{point} of the full shift. A \emph{block} or \emph{word} over $\mathcal{A}$ is a finite sequence of symbols from the alphabet $\mathcal{A}$. The empty word containing no letters is denoted $\epsilon$. 
The \emph{length} of a word 
$u=u_1u_2 \cdots u_n \in \mathcal{A}^n$ is $|u|=n$ and the length of the empty word is $|\epsilon|=0$.
The restriction of $x \in \mathcal{A}^\Z$ to the set of integers in the interval $[i,j]$ with $i \leq j$ is
\begin{equation*}
    x_{[i,j]} = x_{i}x_{i+1}\ldots x_{j},
\end{equation*}
the block of coordinates in $x$ from position $i$ to position $j$. 
Similar for intervals $(i,j)$, $[i,j)$, $(i,j]$.
By extension we define 
$$x_{[i, \infty)}=x_ix_{i+1}\ldots ,$$
$$x_{(-\infty,i]}=\ldots x_{i-1} x_i.$$
A word 
$u$ \emph{appears} in the point $x=(x_i)_{i \in \Z}$ if
$u=x_{[i,j]}$ for some integers $i \leq j$.

Let $u \in \mathcal{A}^m$ and 
$v \in \mathcal{A}^n$
with $m \leq n$. 
If 
$v_{[0,m)}=u$ we say 
$u$ is a \emph{prefix} of 
$v$. If $v_{[n-m,n)}=u$ we say
$u$ is a \emph{suffix} of $v$.

The index $i$ of a bi-infinite sequence $x=(x_i)_{i \in \Z}$ may be thought of as the time. Then shifting the sequence one position to the left corresponds to moving forward in time.
\begin{defn}
    The \emph{shift map} $\sigma:\mathcal{A}^{\mathbb{Z}} \to \mathcal{A}^{\mathbb{Z}}$ maps a point $x$ to $y = \sigma(x)$ defined by $y_{i} = x_{i+1}$.
\end{defn}

\begin{defn}
    A point $x \in \mathcal{A}^\Z$ is said to be \emph{periodic for $\sigma$} if $\sigma^{p}(x)=x$ for some integer $p\geq 1$, in this case we say $x$ has \emph{period $p$ under $\sigma$}. 
    Equivalently, $x$ is a periodic point of period $p$ under the shift map $\sigma$ if $x=u^\infty=\ldots uuu \ldots$ for some word $u$ of length $p$.
    
    If $x$ is periodic, the smallest positive integer $p$ such that $\sigma^{p}(x)=x$ is the \emph{minimal period} of $x$. The point $x$ is a \emph{fixed point} for $\sigma$ if $\sigma(x)=x$.
\end{defn}

There are no restriction on what sequences can occur in the full shift  $\mathcal{A}^\Z$. Interesting dynamical properties arise when we impose constraints on which bi-infinite sequences occur in our space. Let $\mathcal{F}$ denote some set of words over the alphabet $\mathcal{A}$, the elements of $\mathcal{F}$ are known as \emph{forbidden words}. We define 
$$
X_\mathcal{F}=\{x \in \mathcal{A}^\Z: \text{ no word of }\mathcal{F} \text{ appears in }x\}.
$$
The notion of forbidden words leads us to the definition of a shift space.

\begin{defn}
A \emph{shift space} (or \emph{shift}) is a subset $X$ of the full shift $\mathcal{A}^\Z$ such that $X=X_\mathcal{F}$ for some collection $\mathcal{F}$ of forbidden words over $\mathcal{A}$.
\end{defn}

If there is some finite set of forbidden words $\mathcal{F}$ such that $X=X_\mathcal{F}$, we say $X$ is a \emph{shift of finite type}.
In many cases it's easier to describe what words are allowed to appear in a shift space rather than what words are forbidden.

\begin{defn}
    Let $X$ be a subset of a full shift $\mathcal{A}^{\mathbb{Z}}$.
    The set of all words of length $n$ that appear in some point of $X$ is denoted 
    $\mathcal{B}_{n}(X)$. 
    The \emph{language} of $X$ is the collection
    $$\mathcal{B}(X) = \bigcup_{n=0}^{\infty} \mathcal{B}_{n}(X). $$
    If $w$ is any point of $X$, we denote the set of all 
    words
    of length $n$ appearing in $w$ as $\mathcal{B}_{n}(w)$.
\end{defn}
The language of $X$ consists of all words which appear in some point of $X$.
By \cite[Proposition 1.3.4]{Lind-and-Marcus}, the language of a shift space determines the shift space.
For any subset $X \subseteq \mathcal{A}^{\mathbb{Z}}$, the condition that $x\in \mathcal{A}^{\mathbb{Z}}$ has each $x_{[i,j]} \in \mathcal{B}(X)$ for integers $i \leq j$ is equivalent to $X = X_{\mathcal{B}(X)^{c}}$. So $X \subseteq \mathcal{A}^{\mathbb{Z}}$ is a shift space if and only if whenever $x \in \mathcal{A}^\Z$ and each $x_{[i,j]} \in \mathcal{B}(X)$ then $x \in X$. Therefore, a shift space may be equivalently defined by its language.



For any shift space $X$, the \emph{cylinder set} of
$u \in \mathcal{B}_n(X)$
is defined as
$$ [u]=\{x\in X: x_{[0,n)}=u\}.$$
Take the discrete product topology on the full shift $\mathcal{A}^\Z$.
The collection of cylinder sets forms a basis for the topology on $X \subseteq \mathcal{A}^\Z$.
The \emph{extender set} of
$u \in \mathcal{B}_n(X)$
is
$$E_X(u)=\{x_{(- \infty,0)}x_{[n,\infty)}: x \in [u]\}.$$

\begin{example}
The extender set of any word $u$ in the language of $X=\{0,1\}^\Z$ is $E_X(u)=\{0,1\}^\Z$.
\end{example}

Now we discuss several important dynamical properties that a shift space may have. 
\begin{defn}
Let $X$ be a shift space. A word 
$v \in \mathcal{B}(X)$ is \emph{synchronizing} if 
$uv, vw \in \mathcal{B}(X)$ implies 
$uvw \in \mathcal{B}(X)$. If $\mathcal{B}(X)$ contains a synchronizing word, we say $X$ is \emph{synchronized}.
\end{defn}

\begin{defn}
A shift space $X$ is said to be \emph{irreducible} if for all ordered pairs 
$(u,w)$ with $u,w \in \mathcal{B}(X)$ there is some 
$v \in \mathcal{B}(X)$ such that 
$uvw \in \mathcal{B}(X)$.
\end{defn}

\begin{defn}
A shift space $X$ is \emph{mixing} if for all 
$u,w \in \mathcal{B}(X)$, there is some positive integer $N$ such that for each $n \geq N$ there exists a word 
$v \in \mathcal{B}_n(X)$ such that
$uvw \in \mathcal{B}(X)$.
\end{defn}

\begin{defn}
    The \emph{(topological) entropy} of a shift space $X$ is defined as
    $$h(X)=h_{\text{top}}(X) = \lim_{n \rightarrow \infty} \frac{1}{n} \log{|\mathcal{B}_{n}(X)|}.$$
\end{defn}

For the remainder of this paper any measure $\mu$ will be assumed to be a Borel probability measure on a shift space $X$. Additionally, we assume that $\mu$ is invariant under the shift map $\sigma$, that is, $\mu(\sigma^{-1}(B))=\mu(B)$ for any measurable set $B$. We denote the set of all such measures on $X$ as $\mathcal{M}(X)$.

\begin{defn}
The \emph{metric entropy} or \emph{measure-theoretic entropy} of $X$ with a given measure $\mu$ is defined as
$$
h_\mu(X)=\lim_{n \to \infty} \frac{1}{n} \sum_{\alpha \in \mathcal{A}^n}-\mu[\alpha]\log (\mu[\alpha]).
$$
\end{defn}
Since any word $\alpha= \alpha_1 \dots \alpha_n \in \mathcal{A}^{n+m}$ can be broken into two words $\alpha_{[1,n]} \in \mathcal{A}^n$, $\alpha_{[n+1,n+m]} \in \mathcal{A}^m$, the sequence $\sum_{\alpha \in \mathcal{A}^n}-\mu[\alpha]\log (\mu[\alpha])$ is subadditive in $n$. By Fekete’s Lemma, the limit defining the measure-theoretic entropy exists and we have
$$h_\mu(X)=\lim_{n \to \infty} \frac{1}{n} \sum_{\alpha \in \mathcal{A}^n}-\mu[\alpha]\log (\mu[\alpha])=\inf_{n \geq 1} \frac{1}{n} \sum_{\alpha \in \mathcal{A}^n}-\mu[\alpha]\log (\mu[\alpha]).$$

The
topological entropy and metric entropy 
of a shift $X$ are related by the \emph{variational principle} (see, e.g. \cite[Theorem 8.6]{Walters}) which states:
$$
h(X)= \sup_{\mu \in \mathcal{M}(X)} h_\mu(X)
$$
A measure $\mu$ is called a \emph{measure of maximal entropy}, or MME, if $h_{\mu}(X)=h(X)$.

\begin{remark}\label{MME exists for shift space}
Any shift space $X$ is a compact metric space with the metric 
$$d_\theta(x,y)=\theta^{\max\{k:x_i=y_i, |i|\leq k\}}$$
for $0<\theta<1$ and 
the shift map $\sigma:X \to X$ is an expansive homeomorphism. By \cite[Theorem 8.2]{Walters},
the entropy map $\mu \mapsto h_\mu(X)$ is upper semi-continuous. 
Then the entropy map attains a maximum $h_\mu(X)$ on $\mathcal{M}(X)$ and by the variational principle, $h(X)=h_\mu(X)$. Hence, a measure of maximal entropy exists on any shift space $X$.
\end{remark}

\section{$S$-Gap Shifts}

\begin{defn}\label{S-gap defn}
    Let $S \subseteq \Z_{\geq 0}$. The \emph{$S$-gap shift} 
    is 
    $$X_S=\overline{\{\dots 10^{n_{-1}}10^{n_0}10^{n_1} \dots: n_i \in S\}} \subseteq \{0,1\}^\Z. $$
    Equivalently, an $S$-gap shift is the shift space $X_w(S)$
    on the alphabet $\{0,1\}$ 
    consisting of points of the form 
$$\dots 10^{n_{-1}}10^{n_0}10^{n_1} \dots$$
    with $n_i \in S$. In the case that $S$ is infinite, we also allow sequences which begin and/or end with an infinite string of $0$'s.
\end{defn}

\begin{thm}[{{\cite[p. 1407]{Jung}}}]
    An $S$-gap shift is mixing if and only if $$\gcd\{n+1:n \in S\}=1.$$
\end{thm}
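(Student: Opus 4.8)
The plan is to use the fact that $1$ is a synchronizing word for $X_S$, together with a numerical-semigroup estimate. Write $d=\gcd\{n+1:n\in S\}$ and assume $S\neq\emptyset$, since otherwise $X_S=\emptyset$ and there is nothing to prove. I would first record two elementary facts. (i) The word $1$ is synchronizing: if $p1,1q\in\mathcal{B}(X_S)$, then splicing a point realizing $p1$ (to the left of the position of its last $1$) onto a point realizing $1q$ (from the position of its first $1$) yields a point of $X_S$ containing $p1q$, because a coordinate carrying a $1$ determines the gap structure independently on each side of it. (ii) A block $1v1$ lies in $\mathcal{B}(X_S)$ precisely when $v=0^{m_1}10^{m_2}1\cdots10^{m_k}$ for some $k\geq1$ with all $m_i\in S$, and then $|1v1|=\sum_{i=1}^{k}(m_i+1)+1$.

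For the forward direction I argue the contrapositive. If $d>1$, then by (ii) every block $1v1\in\mathcal{B}(X_S)$ has $|1v1|\equiv1\pmod d$, so $|v|\equiv-1\pmod d$. Choosing $u=w=1$ (which lies in $\mathcal{B}(X_S)$ because $S\neq\emptyset$), for each length $n\not\equiv-1\pmod d$ there is no $v\in\mathcal{B}_n(X_S)$ with $uvw=1v1\in\mathcal{B}(X_S)$; since $d>1$ there are arbitrarily large such $n$, so $X_S$ is not mixing.

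For the converse, suppose $d=1$. The key input is that the additive semigroup generated by $\{n+1:n\in S\}$ contains all sufficiently large integers: since the set has gcd $1$, some finite subset already has gcd $1$, and the classical Frobenius/Chicken McNugget theorem then supplies an $F$ with $\{F,F+1,\dots\}$ contained in this semigroup. By (ii), for every $\ell\geq F$ there is a word $v_\ell$ with $|v_\ell|=\ell$ and $1v_\ell1\in\mathcal{B}(X_S)$. Given arbitrary $u,w\in\mathcal{B}(X_S)$, I would next pad $u$ on the right to a block ending in $1$: write $u=p10^{s}$, where $p1$ is understood to be $1$ (so $u=0^s$) if $u$ contains no $1$; pick $s'\in S$ with $s'\geq s$, which is possible because $0^s\in\mathcal{B}(X_S)$ forces $s\leq\max S$ when $S$ is finite and is immediate when $S$ is infinite; then $\hat u:=0^{s'-s}1$ satisfies $u\hat u=p10^{s'}1\in\mathcal{B}(X_S)$ by (i). Symmetrically, pad $w$ on the left to a block $\check w w\in\mathcal{B}(X_S)$ beginning in $1$. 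Applying (i) twice then gives $u\,\hat u\,v_\ell\,\check w\,w\in\mathcal{B}(X_S)$ for every $\ell\geq F$, and here the connecting block $\hat u v_\ell\check w$ has length $|\hat u|+|\check w|+\ell$, which takes every value at least $|\hat u|+|\check w|+F$. Hence $X_S$ is mixing.

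I expect the main obstacle to lie in the converse, and specifically in the step that upgrades ``all large lengths occur between two fixed symbols $1$'' to ``all large lengths occur between arbitrary blocks $u,w$'': this is where one must verify that every block of $X_S$ extends to a block ending, respectively beginning, in $1$, which requires handling the degenerate blocks $u=0^s$ and separating the finite and infinite cases for $S$, and where one must check that the numerical-semigroup fact survives when $S$ is infinite. Everything else is routine once (i) and (ii) are established.
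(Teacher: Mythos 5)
Your argument is correct, and it is essentially the approach the paper itself takes (in its proof for the more general $(S,w)$-gap shifts, of which this theorem is the case $w=1^\infty$ after relabeling symbols): a synchronizing symbol used to glue blocks, the numerical-semigroup/Frobenius fact to realize every sufficiently large connecting length when $\gcd\{n+1:n\in S\}=1$, and the mod-$d$ length obstruction for the converse. The differences are cosmetic: you argue necessity contrapositively via congruences where the paper extracts connecting words of two consecutive lengths $N+1$ and $N+2$, and your explicit padding of $u$ and $w$ to blocks ending, respectively beginning, in the synchronizing symbol is exactly the paper's irreducibility construction used in its mixing proof.
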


A formula for the entropy of the $S$-gap shift is 
given as an exercise in \cite[Exercise 4.3.7]{Lind-and-Marcus} and is computed in
\cite[Corollary 3.21]{Garcia}.
Several other proofs of this formula are described in \cite{Blog} one of which is generalized to the ordered $\mathcal{S}$-limited shifts in \cite{matson}. Additionally, we 
present a proof of the $S$-gap entropy formula, Theorem \ref{thm:Entropy of S-gap}.
This proof has the advantage of taking a combinatorial approach, but does not generalize well for the $(S,w)$-gap shifts. So we adopt an approach similar to that of \cite[Corollary 3.21]{Garcia} for our proof of Theorem \ref{S,w Entropy}.

\begin{thm}\label{thm:Entropy of S-gap}
    The \emph{entropy} of any $S$-gap shift is 
    $h(X(S)) = \log \lambda$
    where $\lambda>0$ is the unique positive solution to 
    $$ 1=\sum_{n \in S}\lambda^{-(n+1)}.$$
\end{thm}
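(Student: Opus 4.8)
The plan is to count $|\mathcal{B}_N(X(S))|$ directly, using the block decomposition of words of the $S$-gap shift. For $n\in S$ put $B_n=10^n$, a word of length $n+1$, and let $t_N$ be the number of words of length $N$ that are (possibly empty) concatenations $B_{n_1}B_{n_2}\cdots B_{n_k}$ with every $n_i\in S$, with the convention $t_0=1$. Such a concatenation is determined by the tuple $(n_1,\dots,n_k)$, since the $1$'s mark the block starts, and it belongs to $\mathcal{B}(X(S))$ because it occurs in the point $\cdots B_{n_1}\cdots B_{n_k}\cdots$. Grouping by length and expanding (all terms nonnegative, so the rearrangement is legitimate), one obtains
$$\sum_{N\ge 0}t_N x^N=\sum_{k\ge 0}\Big(\sum_{n\in S}x^{n+1}\Big)^{k}=\frac{1}{\,1-\sum_{n\in S}x^{n+1}\,}$$
wherever $q(x):=\sum_{n\in S}x^{n+1}<1$.

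First I would record two elementary facts. The map $\lambda\mapsto\sum_{n\in S}\lambda^{-(n+1)}$ is finite and strictly decreasing on its interval of definition, with limits $+\infty$ and $0$ at the two ends, so the equation $\sum_{n\in S}\lambda^{-(n+1)}=1$ has a unique positive solution $\lambda$; moreover $\lambda\ge 1$ (if $\lambda<1$ each term would exceed $1$), so $1/\lambda\le 1$ and $q(1/\lambda)=1$ makes sense. I then claim that the radius of convergence $R$ of the series above equals $1/\lambda$: since $q$ is continuous and increasing on $[0,1/\lambda]$ with $q(1/\lambda)=1$, we get $q(x)<1$ for $x<1/\lambda$, so the series converges there and $R\ge 1/\lambda$; and $\sum_N t_N(1/\lambda)^N=\sum_{k\ge 0}q(1/\lambda)^k=+\infty$, so $R\le 1/\lambda$. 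By Cauchy--Hadamard, $\limsup_N\frac1N\log t_N=-\log R=\log\lambda$.

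Next I relate $|\mathcal{B}_N(X(S))|$ to the $t_N$. Each $w\in\mathcal{B}_N(X(S))$ is either $0^N$ or uniquely of the form $0^a\,\tilde w\,0^b$ with $a,b\ge 0$ and $\tilde w\in\mathcal{B}(X(S))$ beginning and ending with $1$. Writing $\tilde w=10^{n_1}10^{n_2}\cdots 10^{n_{k-1}}1$, every gap $0^{n_i}$ is internal and hence forces $n_i\in S$; deleting the final $1$ is then a bijection between the length-$M$ words $\tilde w$ of this type and the length-$(M-1)$ concatenations of blocks $B_n$, so there are exactly $t_{M-1}$ of them. Consequently $|\mathcal{B}_N(X(S))|\ge t_{N-1}$, while also
$$|\mathcal{B}_N(X(S))|\le 1+\sum_{M=1}^{N}(N-M+1)\,t_{M-1}\le 1+(N+1)^2\max_{0\le j\le N-1}t_j.$$
Because the prefix map $\mathcal{B}_{N+1}(X(S))\to\mathcal{B}_N(X(S))$ is surjective, $|\mathcal{B}_N(X(S))|$ is non-decreasing in $N$, so the first inequality upgrades to $|\mathcal{B}_N(X(S))|\ge\max_{0\le j\le N-1}t_j$. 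Since replacing $t_j$ by a running maximum does not change the exponential growth rate, $\limsup_N\frac1N\log\max_{0\le j\le N-1}t_j=\limsup_N\frac1N\log t_N=\log\lambda$, and the two displayed bounds squeeze $h(X(S))=\lim_N\frac1N\log|\mathcal{B}_N(X(S))|$ to $\log\lambda$.

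The content is almost all bookkeeping; the one place that wants care is the behaviour of $q$ at $x=1/\lambda$ when $S$ is infinite, where one must note $1/\lambda<1$ (so that $q(1/\lambda)$ is a genuine convergent sum) and invoke continuity of $q$ up to that endpoint. As already remarked, this combinatorial route does not carry over to general $(S,w)$-gap shifts — the bijection with concatenations of the $B_n$ is special to the binary alphabet — which is why Theorem~\ref{S,w Entropy} is instead obtained through Theorem~\ref{thm:Garcia 3.12}.
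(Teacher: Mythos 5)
Your proof is correct and is essentially the paper's own argument: both decompose words of $X(S)$ into concatenations of gap blocks ($10^n$ in your version, $0^n1$ in the paper's) padded by leading and trailing zeros, and identify $h(X(S))$ with $-\log$ of the radius of convergence of the block-concatenation generating function, i.e.\ the unique positive root of $\sum_{n\in S}x^{n+1}=1$. The only difference is bookkeeping: you squeeze $|\mathcal{B}_N(X(S))|$ at the coefficient level (polynomial factors plus Cauchy--Hadamard), whereas the paper compares convergence of the power series $H(z)$ and $\hat{H}(z)=\sum_{k\ge1}F_1(z)^k$ directly.
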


\begin{proof}
Let $a_n=|\mathcal{B}_n(X(S))|$ and consider the function 
$$H(z)=\sum_{n=0}^\infty a_n z^n.$$
We compute that the radius of convergence of $H(z)$ is $\xi=e^{-h(X(S))}$ as follows 
\begin{align*}
    \log (\xi) &= \log (\lim_{n \to \infty} \sqrt[n]{a_n}^{-1})\\
    &= \lim_{n\to \infty} -\log(\sqrt[n]{a_n}) \\
    &= -\lim_{n \to \infty} \frac{1}{n} \log(a_n)\\
    &=-h(X(S)).
\end{align*}
Put $G=\{0^n1:n \in S\}$ and let $A_n^k$ be the set of all words of length $n$ that may be written as the concatenation of exactly $k\geq 1$ words in $G$, $A_n^0:=\{\epsilon\}$.
Define 
$$F_k(z)=\sum_{n=1}^\infty |A_n^k|z^n.$$
Note that in the case of $k=1$,
$$F_1(z)=\sum_{n=1}^\infty |A_n^1|z^n=\sum_{n \in S}z^{n+1}$$
because $|A_n^1|=\mathbf{1}_S(n-1)$.

We claim that $F_k(z) F_\ell(z)=F_{k+\ell}(z)$.
It is sufficient to show that 
$$\sum_{m=1}^n |A_{n-m}^k||A_m^\ell|=|A_n^{k+\ell}|.$$
First consider words $u \in A_m^\ell, v \in A_{n-m}^k$ for any $1 \leq m \leq n$.
Since $uv$ has length $n$ and can be written as the concatenation of $\ell$ words in $G$ followed by $k$ words in $G$, we have $uv \in A_n^{k+\ell}$ and thus
$$
\sum_{m=1}^n|A_{n-m}||A_m^\ell| \leq |A_n^{k+\ell}|.
$$
Now consider any $w \in A_n^{k+\ell}$ and break the word after the $\ell$-th $2$ denoting the first part as $u$ and the last part as $v$. Then $uv=w$, $u\in A_m^\ell, v \in A_{n-m}^k$ where $m=|u|$. So 
$$
|A_n^{k+\ell}| \leq \sum_{m=1}^n|A_{n-m}||A_m^\ell|.
$$
By induction, it follows that $F_k(z)=(F_1(z))^k$.

Since $A_n^k$, $k \geq 1$ are disjoint subsets of $\mathcal{B}_n(X(S))$, we have $$\sum_{k \geq 1} |A_n^k| \leq |\mathcal{B}_n(X(S))|.$$
Any word of $\mathcal{B}_n(X(S))$ is of the form $0^n$ or $0^i 1w0^j$ where $w \in A_{n-i-j-1}^k$ for some $k \geq 0$ so 
$$|\mathcal{B}_n(X(S))| \leq 1+\sum_{k\geq 0}\sum_{i \geq 0} \sum_{j \geq 0}|A_{n-i-j-1}^k|.$$
If $z \geq 0$, then we may apply Tonelli's theorem and compute
\begin{gather*}
\sum_{k \geq 1}|A_n^k| \leq |\mathcal{B}_n(X(S))| \leq 1+\sum_{k\geq 0}\sum_{i \geq 0} \sum_{j \geq 0}|A_{n-i-j-1}^k|, \\
\sum_{n\geq 1}\sum_{k \geq 1}|A_n^k|z^n \leq H(z) \leq \sum_{n \geq 1}z^n+\sum_{n \geq 1}\sum_{k\geq 0}\sum_{i \geq 0} \sum_{j \geq 0}|A_{n-i-j-1}^k|z^n,  \\
\sum_{k \geq 1}F_k(z) \leq H(z) \leq \sum_{n \geq 1}z^n+ 
\sum_{k \geq 0} F_k(z) \left(\sum_{i \geq 0}z^i\right) \left(\sum_{j \geq 0} z^{j+1} \right)
\end{gather*}
where the last inequality is because
\begin{align*}
\sum_{n \geq 1}\sum_{k\geq 0}\sum_{i \geq 0} \sum_{j \geq 0}|A_{n-i-j-1}^k|z^n
    &=\sum_{n \geq 1}\sum_{k\geq 0}\sum_{i \geq 0} \sum_{j \geq 0}|A_n^k|z^{n+i+j+1}\\
    &=\sum_{n \geq 1}\sum_{k\geq 0}|A_n^k|z^n\left(\sum_{i \geq 0} \sum_{j \geq 0} z^{i+j+1}\right)\\
    &=\sum_{k \geq 0}F_k(z) \left(\sum_{i \geq 0}z^i\right) \left(\sum_{j \geq 0} z^{j+1} \right).
\end{align*}

If $\hat{H}(z)=\sum_{k \geq 1}F_k(z)=\sum_{k \geq 1} (F_1(z))^k$, then
$$
\hat{H}(z) \leq H(z) \leq \sum_{n \geq 1}z^n+(F_0(z)+\hat{H}(z))\left(\sum_{i \geq 0}z^i\right) \left(\sum_{j \geq 0} z^{j+1} \right).
$$
Since $\sum_{n \geq 1}z^n$ converges for all $z \in [0,1)$ and $F_0(z)= \sum_{n \geq 1}z^n$, $\hat{H}(z)$ converges if and only 
if
$H(z)$ converges. Then the radius of convergence of $H(z)$ is the unique $z>0$ such that $F_1(z)=1$ and by the first paragraph $h(X(S))=\log(1/z).$
\end{proof}

\section{$(S,w)$-gap Shifts}
\label{sec:S,w gap shifts}

\begin{defn}
Let $S \subseteq \Z_{\geq 0}$ and let $w \in \{1,2, \dots, k\}^\Z$ 
where $k>0$ is an integer.
We define the \emph{$(S,w)$-gap shift} 
to be 

$$X_w(S)=\overline{
\{\ldots 0 u_{-1} 0 u_0 0 u_1 \ldots: u_i \text{ appears in }w, |u_i| \in S\}}\subseteq \{0,1, \dots, k\}^\Z.$$

\end{defn}

Recall that $\mathcal{B}_n(w)$ is the set of all length $n$ words that appear in $w$. Let $B=\mathcal{B}(\{1,2, \dots, k\}^\mathbb{Z}) \setminus \left( \bigcup_n \mathcal{B}_n(w)\right)$. If $S$ is infinite, then the forbidden word set of $X_w(S)$ is
\begin{equation}\label{forbidden words for S,w-gap (S infinite)}
\mathcal{F}=
B
\cup \{0u: u \in B\}
\cup \{u0: u \in B\}
\cup \{0u0:|u| \in \Z_{\geq 0} \setminus S\} 
.
\end{equation}
If $S$ is finite, then the forbidden word set of $X_w(S)$ is $$\mathcal{F} \cup \{1,2, \dots, k\}^{1+\max S}.$$ So the $(S,w)$-gap shift is in fact a shift space.

\begin{example}
Let $S \subseteq \Z_{\geq 0}$. By relabeling symbols of the alphabet $\{0,1\}$ in Definition \ref{S-gap defn}, we note that an alternative definition of the $S$-gap shift $X(S)$ is the set of points of $\{0,1\}^\Z$ such that any pair of nearest $0$'s is separated by a block $1^n$ with $n \in S$.
Therefore, any $S$-gap shift is the $(S,w)$-gap shift with $w=1^\infty$.
\end{example}

\begin{example}
If $S\subseteq \Z_{\geq 0}$ is cofinite and words in the language of $\{1,\dots, k\}^\Z$
appear in $w$, then the set described in (\ref{forbidden words for S,w-gap (S infinite)}) is finite and thus $X_w(S)$ is a shift of finite type. For example we may take a point $w \in \{1,\dots, k\}^\Z$ such that $w_i=1$ for $i < 0$ and $w_{[0,\infty)}=\alpha_0 \alpha_1 \ldots \alpha_n \ldots$ 
where $\alpha_n$ denotes the concatenation of all elements of $\{1, \dots, k\}^n$.
\end{example}

\begin{prop}
Any $(S,w)$-gap shift is synchronized with synchronizing word $0$.
\end{prop}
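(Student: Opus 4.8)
The plan is to show that the single letter $0$ meets the definition of a synchronizing word. First I would record that $0\in\mathcal{B}(X_w(S))$: assuming $S\neq\emptyset$ (otherwise $X_w(S)=\emptyset$ and there is nothing to prove), choose $n\in S$ and a length-$n$ word $u$ appearing in $w$, and note that the point $(0u)^\infty\in X_w(S)$ witnesses the symbol $0$. The content of the proof is then the implication: given words $a,b$ with $a0,\,0b\in\mathcal{B}(X_w(S))$, exhibit a point of $X_w(S)$ in which $a0b$ appears.

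I would do this by cutting and splicing two points at a common $0$. Write $D=\{\ldots 0u_{-1}0u_{0}0u_{1}\ldots : u_i \text{ appears in } w,\ |u_i|\in S\}$, so that $X_w(S)=\overline{D}$. Since $[a0]$ is a nonempty open subset of $X_w(S)$ it meets the dense set $D$, and because $D$ is $\sigma$-invariant we may translate to get $x\in D$ with $x_{[-|a|,0]}=a0$, in particular $x_0=0$; likewise we get $y\in D$ with $y_{[0,|b|]}=0b$, so $y_0=0$. Define $z\in\{0,1,\dots,k\}^\Z$ by $z_m=x_m$ for $m\le 0$ and $z_m=y_m$ for $m\ge 0$, which is consistent because $x_0=y_0=0$. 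The key observation is that in a point of $D$ every occurrence of $0$ separates two consecutive blocks of the concatenation, so coordinate $0$ is such a separator in both $x$ and $y$; hence $z$ is again a bi-infinite concatenation $\ldots 0v_{-1}0v_{0}0v_{1}\ldots$ in which each block comes either from $x$ or from $y$, so appears in $w$ and has length in $S$. Thus $z\in D\subseteq X_w(S)$, and since $z_{[-|a|,-1]}=a$, $z_0=0$, $z_{[1,|b|]}=b$, the word $a0b$ occurs in $z$. This gives $a0b\in\mathcal{B}(X_w(S))$, so $0$ is synchronizing and $X_w(S)$ is synchronized.

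The step I expect to be the main point to get right is the passage from ``$a0$ lies in the language of $X_w(S)$'' to ``$a0$ occurs, aligned with coordinate $0$, in a point of $D$'': this uses that cylinders are open, that a nonempty open set meets the dense set $D$, and that $D$ is shift-invariant, and it is the one place where the closure in the definition of $X_w(S)$ is really used. An alternative that bypasses density is to take arbitrary $x,y\in X_w(S)$ realizing $a0$ and $0b$, form the same spliced point $z$, and check that $z$ avoids every forbidden word of \eqref{forbidden words for S,w-gap (S infinite)} (together with $\{1,\dots,k\}^{1+\max S}$ when $S$ is finite): each forbidden word is a single block of nonzero letters flanked by at most one $0$ on each side, and no maximal run of nonzero letters in $z$ can straddle coordinate $0$, so a forbidden occurrence in $z$ would lie entirely among the coordinates $\le 0$ or entirely among those $\ge 0$, hence already occur in $x$ or in $y$ — a contradiction. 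I would present the first version, since it avoids the case analysis on forbidden words.
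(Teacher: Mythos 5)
Your proof is correct and follows the same idea as the paper's: the paper simply asserts that $u0,0v\in\mathcal{B}(X_w(S))$ implies $u0v\in\mathcal{B}(X_w(S))$, which is exactly the gluing-at-a-common-$0$ property you establish. Your version supplies the details the paper leaves implicit (density of the concatenation set $D$, shift-invariance, and the fact that every $0$ in a point of $D$ is a block separator), so it is a fleshed-out form of the same argument rather than a different route.
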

\begin{proof}
If $u0 \in \mathcal{B}(X_w(S))$ and $ 0v \in \mathcal{B}(X_w(S))$, then $ u0v \in \mathcal{B}(X_w(S))$. So $ 0$ is a synchronizing word for $X_w(S)$.
\end{proof}

\begin{prop}\label{S,w-gap irreducible}
Any $(S,w)$-gap shift is irreducible.
\end{prop}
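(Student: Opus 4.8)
The plan is to lean on the property just verified in the preceding proposition, that $0$ is a synchronizing word: if $c0,\,0d \in \mathcal{B}(X_w(S))$ then $c0d \in \mathcal{B}(X_w(S))$. Hence, given $a,b \in \mathcal{B}(X_w(S))$, it suffices to extend $a$ on the right to a word that ends in $0$ and $b$ on the left to a word that begins with $0$; splicing these at the common $0$ yields a word containing $a$ as a prefix and $b$ as a suffix, and the block lying between them is the connecting word $v$ we want. A preliminary reduction makes this work: every word of $\mathcal{B}(X_w(S))$ already appears in one of the generating points $\ldots 0 u_{-1} 0 u_0 0 u_1 \ldots$ (call these the \emph{admissible points}), since a word occurring in a point of a closure occurs in a point of the set being closed, by the usual sequence-extraction argument in the product topology. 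We may also assume $S \neq \emptyset$, since otherwise $X_w(S)=\emptyset$ is vacuously irreducible.

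Now fix $a,b \in \mathcal{B}(X_w(S))$ and write $a = x_{[i,j]}$ for an admissible point $x$. The coordinates at which $x$ equals $0$ are unbounded above (there are infinitely many blocks $0u_m$ with $m \to +\infty$, each contributing a $0$), so choose $p > j$ with $x_p = 0$ and set $a' := x_{[i,p]} \in \mathcal{B}(X_w(S))$; it ends in $0$ and is strictly longer than $a$, so $a' = c0$ with $a$ a prefix of $c$ and $|a| \le |c|$. Symmetrically, writing $b = y_{[i',j']}$ for an admissible point $y$ and choosing $q < i'$ with $y_q = 0$, the word $b' := y_{[q,j']}$ has the form $0d$ with $b$ a suffix of $d$ and $|b| \le |d|$. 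Since $0$ is synchronizing and $c0 = a',\ 0d = b'$ lie in $\mathcal{B}(X_w(S))$, we obtain $c0d \in \mathcal{B}(X_w(S))$. Because $|a| + |b| \le |c| + |d| < |c| + 1 + |d| = |c0d|$, the prefix occurrence of $a$ and the suffix occurrence of $b$ in $c0d$ are disjoint, so $c0d = avb$ where $v := (c0d)_{[\,|a|,\ |c0d|-|b|\,)}$, which is in $\mathcal{B}(X_w(S))$ as a subblock of $c0d$. This $v$ witnesses the irreducibility condition for $(a,b)$.

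The only step that needs care is this final gluing. If one extended $a$ merely to the \emph{nearest} $0$ rather than to one strictly beyond $a$, the occurrences of $a$ and $b$ in $c0d$ could overlap in the single middle $0$, and the factorization $c0d = avb$ would fail. Insisting on $p > j$ and $q < i'$ strictly — which is possible precisely because $0$ occurs cofinally in both directions in an admissible point, and this is exactly what the reduction to admissible points buys us — removes this off-by-one, and everything else is routine bookkeeping.
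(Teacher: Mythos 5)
Your proof is correct and follows essentially the same strategy as the paper's: extend $a$ to the right and $b$ to the left until each reaches a $0$, then glue at that $0$ using the fact that $0$ is a synchronizing word. The only real difference is in how the extension step is justified --- the paper explicitly completes the trailing (resp.\ leading) partial subword of $w$ to a full gap word of length in $S$, whereas you pass back to a pre-closure ``admissible'' point via the cylinder-set argument and extend inside it --- which is a clean substitute for the same step, and your care about the overlap/off-by-one issue is sound.
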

\begin{proof}
Let 
$u,v \in \mathcal{B}(X_w(S))$.
To show $X_w(S)$ is irreducible, we will construct a word $t0s$ such that $ut0sv \in \mathcal{B}(X_w(S))$.

If 
$u$ ends with 
$0$, put $t=w_{[0,n)}$
for some $n\in S$. Otherwise, $u$
has a suffix of the form 
$w_{[i,i+k)}$ with $k$ less than or equal to some element $n \in S$. If $k \in S$, put $t= \epsilon$. If $k \notin S$, put $t=w_{[i+k,i+n)}$. Then $ut0 \in \mathcal{B}(X_w(S))$.
Similarly, if $v$
begins with $ 0$,
put $t=w_{[0,n)}$
for some $n \in S$. Otherwise, 
has a prefix of the form 
$w_{[i-k,i)}$ with $k$ less than or equal to some element $n \in S$.
If $k \in S$, put $s=\epsilon$. If $k \notin S$, put $s=w_{[i-n,i-k)}$. Then $0sv \in \mathcal{B}(X_w(S))$.

Since $0$ is a synchronizing word for $X_w(S)$, $ut0sv \in L(X_w(S))$. Hence $X_w(S)$ is irreducible.
\end{proof}

\begin{remark}\label{irreducible and separated by a 2}
Note that 
the construction 
in the proof of Proposition \ref{S,w-gap irreducible} 
shows that for all 
$u,v \in \mathcal{B}(X_w(S))$
there are words $t,s \in \mathcal{B}(X_w(S))$
such that 
$ut0sv \in \mathcal{B}(X_w(S))$. 
\end{remark}

The following proposition uses this observation and is a slight variant of the proof for $S$-gap shifts in \cite[p. 1407]{Jung} and for the $\mathcal{S}$-graph shifts in \cite[Proposition 3.4.]{Dillon}.
\begin{prop}
An $(S,w)$-gap shift is mixing if and only if $$\gcd\{n+1:n \in S\}=1.$$
\end{prop}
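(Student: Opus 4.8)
The plan is to mimic the classical argument for $S$-gap shifts, using the gcd condition to control which word lengths are realizable between two given blocks. Write $d = \gcd\{n+1 : n \in S\}$. For the forward direction, I would argue contrapositively: suppose $d > 1$. Every point of $X_w(S)$ is (a limit of points) built from blocks of the form $0u$ with $|0u| = |u|+1 \in \{n+1 : n \in S\}$, so between any two occurrences of the symbol $0$ the number of intervening coordinates plus one is a sum of elements of $\{n+1 : n\in S\}$, hence divisible by $d$. Concretely, if $0 \in \mathcal{B}(X_w(S))$ (which it is), then for the pair $u = w = 0$, any $v$ with $0v0 \in \mathcal{B}(X_w(S))$ has $|v|+1 \equiv 0 \pmod d$, so the realizable gap lengths between two $0$'s lie in a single residue class mod $d$; since $d>1$ this set omits infinitely many lengths, contradicting mixing. (One should phrase this carefully at the level of the language rather than points, since $X_w(S)$ is a closure, but the forbidden-word description (\ref{forbidden words for S,w-gap (S infinite)}) makes the divisibility constraint on $0v0$ transparent.)

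For the converse, assume $d = 1$. Let $u, v \in \mathcal{B}(X_w(S))$. By Remark \ref{irreducible and separated by a 2} there exist $t, s \in \mathcal{B}(X_w(S))$ with $ut0sv \in \mathcal{B}(X_w(S))$, and the synchronizing property of $0$ lets me insert, between the two halves $ut0$ and $0sv$, any word $y$ such that $t0y0s \in \mathcal{B}(X_w(S))$ — equivalently, any concatenation $0^{a_1}w\text{-block}\,0^{a_2}w\text{-block}\cdots$ whose intermediate gaps all lie in $S$. So it suffices to show: there is $N$ such that for every $m \ge N$, one can write $m$ (up to a fixed additive constant absorbing the endpoints) as a sum $\sum (n_i + 1)$ with each $n_i \in S$, and then fill each gap of length $n_i$ with an appropriate length-$n_i$ subword of $w$ (which exists since such subwords of $w$ are exactly the admissible fillers, and $\mathcal{B}_{n_i}(w) \neq \emptyset$). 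The existence of such representations for all large $m$ is exactly the numerical-semigroup fact that a set of positive integers with gcd $1$ generates all sufficiently large integers (Chicken McNugget / Schur). Combining: for all $n$ past some threshold, a word $v \in \mathcal{B}_n(X_w(S))$ with $uvw' \in \mathcal{B}(X_w(S))$ (here $w'$ is the second given block, renamed to avoid clash with the point $w$) can be built, giving mixing.

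The main obstacle — really the only non-routine point — is bookkeeping at the two ends: the given blocks $u$ and $v$ may end/begin in the middle of a $w$-block rather than at a $0$, so the "length available to fill" is not simply $n$ but $n$ shifted by boundary corrections that depend on $u$ and $v$. I would handle this by first using Remark \ref{irreducible and separated by a 2} to reduce to the case where a $0$ has been planted on each side (so the gap to be filled is genuinely a concatenation of $0$-delimited $w$-blocks of lengths in $S$), and then only afterward invoke the gcd/numerical-semigroup statement on the clean interior region; the finitely many boundary configurations only change the threshold $N$, not the conclusion. A minor subtlety is the case $0 \notin S$ versus $0 \in S$ and whether $S$ is finite, but if $S$ is finite then $\{n+1:n\in S\}$ is finite with gcd $1$ only when it contains $1$, i.e. $0 \in S$ — and a quick check shows $X_w(S)$ is then conjugate to a full shift on an appropriate alphabet, which is mixing; so the interesting case is $S$ infinite, where the semigroup argument applies directly.
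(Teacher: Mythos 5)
Your argument is essentially the paper's: the forward direction (stated contrapositively) rests on the same observation that any word $0v0$ in the language decomposes into blocks $0u_i$ with $|u_i|\in S$, so $|v|+1$ is a sum of elements of $\{n+1:n\in S\}$; and the converse uses the same three ingredients as the paper, namely the irreducibility construction of Remark \ref{irreducible and separated by a 2} to plant a $0$ on each side, the synchronizing property of $0$ to splice in a $0$-delimited filler, and the fact that a set of positive integers with greatest common divisor $1$ represents every sufficiently large integer, with the boundary lengths absorbed into the threshold $N$. One side remark is wrong, though harmlessly so: a finite set $\{n+1:n\in S\}$ can have greatest common divisor $1$ without containing $1$ (take $S=\{1,2\}$), so finiteness of $S$ does not force $0\in S$, and $0\in S$ does not in general make $X_w(S)$ conjugate to a full shift. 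The case split is unnecessary anyway: the numerical-semigroup fact you invoke holds for finite generating sets (indeed that is its classical form), so the main argument covers finite and infinite $S$ uniformly, exactly as in the paper.
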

\begin{proof}
If $X_w(S)$ is mixing, then there is some $N>0$ such that for all $n\geq N$ there is some word $\gamma \in \mathcal{B}_n(X_w(S))$ such that 
$0 \gamma 
0 \in \mathcal{B}(X_w(S))$. In particular, we may choose $\gamma_1\in \mathcal{B}_N(X_w(S))$,$\gamma_2 \in \mathcal{B}_{N+1}(X_w(S))$ such that 
$0 \gamma_1 0,
0 \gamma_2 0 \in \mathcal{B}(X_w(S))$ and $\gamma_1, \gamma_2$ are of the form 
$$u_{1} 0 u_{2} \cdots  0 u_{m}$$
for $u_{i} \in \bigcup_{k \in S} \mathcal{B}_k(w)$. Then 
$0\gamma_1 \in \mathcal{B}_{N+1}(X_w(S))$ and 
$0 \gamma_2 \in \mathcal{B}_{N+2}(X_w(S))$ are concatenations of words of the form 
$0 u_{i}$. Then $N+1$ and $N+2$ are equal to sums of elements of $\{n+1: n \in S\}$ because 
$|0 u_{i}|=|u_{i}|+1 \in \{n+1:n \in S\}$. Since $N+1$ and $N+2$ are relatively prime, $\gcd\{n+1:n \in S\}=1$.

Now suppose that $\gcd \{n+1:n \in S\}=1$ and let $\alpha, \beta \in \mathcal{B}(X_w(S))$.
Then there is some $M>0$ such that any integer $n\geq M+1$ may be written as a sum of elements of $\{n+1:n \in S\}$ and thus there is a word 
$$0 v_1 0 v_2 \cdots 0 v_{m'} 0$$
of length $n$ with $v_i\in \bigcup_{k \in S} \mathcal{B}_k(w)$.
As noted in remark \ref{irreducible and separated by a 2}, there is a word 
$\gamma_1 0 \gamma_2 \in \mathcal{B}(X_w(S))$ such that $\alpha \gamma_1 0 \gamma_2\beta \in \mathcal{B}(X_w(S))$. 
Let $N=M+|\gamma_1\gamma_2|+1$. For any $n\geq N$ there is a word
$$0 u_1 0 u_2 \cdots 0 u_{m} 0$$
of length $n-|\gamma_1\gamma_2|\geq M+1$ with $u_i\in \bigcup_{k \in S} \mathcal{B}_k(w)$.
Since $0$ is a synchronizing word for $X_w(S)$, 
$$\gamma:=\gamma_1 0 u_1 0 u_2 \cdots 0 u_{m} 0 \gamma_2 \in \mathcal{B}(X_w(S))$$ is a word of length $n$ such that $\alpha \gamma \beta \in \mathcal{B}(X_w(S))$. This shows $X_w(S)$ is mixing.
\end{proof}

\begin{defn}
A shift space $X$ is a \emph{coded system} if it is the closure of the set of points obtained by freely concatenating elements of some set $G$ of words over an alphabet $\mathcal{A}$. The elements of $G$ are called \emph{generators} of the coded system.
\end{defn}

The $(S,w)$-gap shifts are an example of a coded system with the generating set $$G=\bigcup_{n\in S} \bigcup_{u \in B_n(w)} u 0 =\{ u 0 : u \text{ appears in } w, |u|\in S\}.$$

\section{Entropy}

\begin{lemma}[{{\cite[
Lemma 3.14]{Garcia}}}]\label{lemma:extender sets} 
If $v$ is a synchronizing word for a shift space $X$, then for any 
word of the form $v u v \in \mathcal{B}(X)$, we have $$E_X(v u v)=E_X(v).$$
\end{lemma}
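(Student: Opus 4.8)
The plan is to recast the claim as the single biconditional
$$\ell\, v\, r \in X \quad\Longleftrightarrow\quad \ell\, vuv\, r \in X,$$
asserted for every left-infinite sequence $\ell$ and every right-infinite sequence $r$, where juxtaposition denotes the bi-infinite point obtained by inserting the indicated block in the coordinates immediately following $\ell$; writing $p=|v|$, the block $v$ occupies coordinates $[0,p)$ on the left-hand side while $vuv$ occupies $[0,|vuv|)$ on the right-hand side. Unwinding the definition of $E_X$, an element of $E_X(v)$ is precisely the bi-infinite point $\ell r$ for some $\ell,r$ with $\ell v r \in X$, while an element of $E_X(vuv)$ is precisely the bi-infinite point $\ell r$ for some $\ell,r$ with $\ell vuv r \in X$; since a bi-infinite point determines its negative part $\ell$ and its nonnegative part $r$, the displayed biconditional is exactly the assertion $E_X(vuv)=E_X(v)$.

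To prove the biconditional I would invoke the characterization recalled in the Background section: a point of $\mathcal{A}^\Z$ lies in the shift space $X$ if and only if each of its finite subwords lies in $\mathcal{B}(X)$. Hence in each direction it suffices to check that every finite subword of the point we wish to place in $X$ lies in $\mathcal{B}(X)$. Subwords lying entirely inside $\ell$ or entirely inside $r$ are harmless, because $\ell$ and $r$ are tails of an actual point of $X$---the point furnished by the hypothesis of that direction; and in the direction where the target point is $\ell vuv r$, subwords lying entirely inside the central block $vuv$ lie in $\mathcal{B}(X)$ since $vuv\in\mathcal{B}(X)$ by hypothesis. This is the only use of that hypothesis, and it is what makes the inclusion $E_X(v)\subseteq E_X(vuv)$ work. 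All that remains is to handle the finite subwords that straddle a boundary between $\ell$, the central block, and $r$.

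For the implication ``$\Leftarrow$'' (which yields $E_X(vuv)\subseteq E_X(v)$), assume $\ell vuv r\in X$. A finite subword of $\ell v r$ straddling only one of the two boundaries flanking the displayed $v$ is itself a subword of $\ell vuv r$---use the first, resp.\ the second, copy of $v$---hence lies in $\mathcal{B}(X)$; a subword straddling both boundaries has the form $\ell_{[-n,0)}\, v\, r_{[p,p+m)}$, whose two halves $\ell_{[-n,0)}v$ and $v\, r_{[p,p+m)}$ are subwords of $\ell vuv r$ and so lie in $\mathcal{B}(X)$, whence the synchronizing property of $v$ forces their concatenation into $\mathcal{B}(X)$. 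For ``$\Rightarrow$'' (which yields $E_X(v)\subseteq E_X(vuv)$), assume $\ell v r\in X$; a short case analysis on which boundaries a straddling subword of $\ell vuv r$ crosses shows it is always a product of two or three words of $\mathcal{B}(X)$ overlapping in copies of $v$---schematically $\ell_{[-n,0)}v$ glued to a prefix of $vuv$, a suffix of $vuv$ glued to $v\, r_{[p,p+m)}$, or $\ell_{[-n,0)}v$ glued to $vuv$ glued to $v\, r_{[p,p+m)}$---so one or two applications of the synchronizing property of $v$ place it in $\mathcal{B}(X)$. In either direction every finite subword of the target point belongs to $\mathcal{B}(X)$, hence the target point belongs to $X$, and the biconditional is proved.

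The argument is mostly bookkeeping; the step requiring the most care is the case analysis for ``$\Rightarrow$'', where one must track precisely where the two copies of $v$ sit within $vuv$ so that the pieces being glued are genuine prefixes or suffixes of words already known to lie in $\mathcal{B}(X)$, and must feed the synchronizing property the correct finite words on each side of $v$. A variant that avoids the language characterization would instead build the desired point of $X$ directly as a limit of points of $X$ realizing the finite words $\ell_{[-n,0)}\, v\, r_{[p,p+m)}$ (respectively $\ell_{[-n,0)}\, vuv\, r_{[p,p+m)}$) with the central block pinned to fixed coordinates, then pass to a convergent subsequence using compactness of $X$; that compactness step is the crux of the alternative.
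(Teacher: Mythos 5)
Your argument is correct. Note, however, that the paper itself does not prove this lemma at all: it is quoted verbatim from García-Ramos and Pavlov \cite[Lemma 3.14]{Garcia} and used as a black box, so there is no internal proof to compare against. What you have written is a self-contained, elementary proof from the definitions, and it is essentially the standard argument for this fact: translate $E_X(vuv)=E_X(v)$ into the biconditional $\ell v r\in X \Leftrightarrow \ell\,vuv\,r\in X$, then verify membership in $X$ subword-by-subword via the characterization (stated in the Background section) that $x\in X$ iff every finite subword of $x$ lies in $\mathcal{B}(X)$, using the synchronizing property of $v$ to glue across each boundary --- one application for the inclusion $E_X(vuv)\subseteq E_X(v)$, and one or two applications (plus the hypothesis $vuv\in\mathcal{B}(X)$) for the reverse inclusion. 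Your case analysis is complete; the only cosmetic imprecision is that a straddling subword which does not contain a full copy of $v$ beyond the boundary is not literally a gluing ``overlapping in a copy of $v$'' but is simply a subword of the given point (or of $vuv$), which you in effect already cover since the language is closed under passing to subwords. The compactness variant you mention at the end would also work and is closer in spirit to how one often argues directly with extender sets, but the language-characterization route you chose is cleaner given the background the paper sets up.
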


\begin{thm}[{{\cite[
Corollary 3.12]{Garcia}}}]\label{thm:Garcia 3.12}
    Let $X$ be a subshift, $\mu$ a measure of maximal entropy and $u,v \in \mathcal{B}(X)$. If $E_{X}(u)=E_{X}(v)$, 
    then 
    $$\mu[v]=\mu[u]e^{h(X)(|u|-|v|)}.$$
\end{thm}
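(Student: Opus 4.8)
\emph{Proof proposal (plan).}
This identity is the extender-set form of the Parry/Markov relation for a measure of maximal entropy: two words that are interchangeable in every context must receive mass in the ratio forced by their lengths and $h(X)$. The plan has three stages.

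\emph{Stage 1: reduce to an ergodic MME.} Both sides of the asserted equality are affine in $\mu$, while $h(X)$, $|u|$, $|v|$ stay fixed; and almost every ergodic component $\mu_\omega$ of an MME is again an MME, because $h_\mu=\int h_{\mu_\omega}\,d\omega$ with every $h_{\mu_\omega}\le h(X)=h_\mu$. Hence it suffices to prove the identity for ergodic $\mu$.

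\emph{Stage 2: extract the combinatorics of $E_X(u)=E_X(v)$.} A word $puq$ lies in $\mathcal B(X)$ exactly when some element of $E_X(u)$ has left part ending in $p$ and right part beginning with $q$; equality of extender sets therefore yields the \emph{swap property}
$$ puq\in\mathcal B(X)\iff pvq\in\mathcal B(X)\qquad(\text{for all words }p,q), $$
so inside any word of $\mathcal B(X)$ an occurrence of $u$ may be exchanged for $v$ (and conversely) without leaving $\mathcal B(X)$. The same computation gives $E_X(us)=E_X(vs)$ and $E_X(su)=E_X(sv)$ for every word $s$, which is what one uses to tame the self-overlap combinatorics of $u$ and $v$.

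\emph{Stage 3: pass to the measure using maximality.} Along a $\mu$-generic point $x$, Birkhoff's theorem gives that $x_{[0,N)}$ contains $(\mu[u]+o(1))N$ occurrences of $u$ and $(\mu[v]+o(1))N$ of $v$, and Shannon--McMillan--Breiman gives $\mu[x_{[0,N)}]=e^{-(h+o(1))N}$, hence that there are $e^{(h+o(1))N}$ generic length-$N$ words. Exchanging (a $\mu$-typical density of) the occurrences of $u$ for $v$'s carries each generic length-$N$ word to a word of $\mathcal B(X)$ whose length differs by $(\mu[u]+o(1))(|v|-|u|)N$; one then counts how many length-$N$ words produce a given image — equivalently, runs the exchange in the reverse direction — and, matching the exponential growth rates (all governed by $h$, since every word produced again lies in $\mathcal B(X)$), deduces $\mu[u]\,e^{h|u|}=\mu[v]\,e^{h|v|}$, which is the claim. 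Equivalently, if $\mu[u]\,e^{h|u|}\neq\mu[v]\,e^{h|v|}$ one can use the exchange to manufacture an invariant measure of strictly larger entropy, contradicting the variational principle.

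\emph{Main obstacle.} Stage 3 is the heart of the matter. The exchange map is genuinely many-to-one, so one cannot argue by bare injectivity; one must control (i) the fibres of the exchange, (ii) the self-overlap of $u$ with itself and with $v$ — this is where $E_X(us)=E_X(vs)$ enters, to pad $u$ and $v$ into a more rigid-free position — and (iii) the non-uniformity of $\mu$ over length-$N$ cylinders, which forces the counting to be run along $\mu$-generic words rather than over $\mathcal B_N(X)$. It is precisely in reconciling these three points that the maximality of $\mu$, rather than mere shift-invariance, is used, and keeping the exponential bookkeeping honest is the technical core of the argument.
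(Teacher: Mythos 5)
This statement is not proved in the paper at all: it is imported verbatim from García-Ramos--Pavlov \cite[Corollary 3.12]{Garcia}, so there is no in-paper argument to compare against. Measured against what a proof would require, your proposal is a strategy outline rather than a proof, and you say so yourself: the entire content of the theorem sits in your Stage 3, which you leave as an acknowledged ``main obstacle.'' Stages 1 and 2 are fine (the reduction to ergodic $\mu$ via affinity of both sides and of the entropy map, and the swap property $puq\in\mathcal B(X)\iff pvq\in\mathcal B(X)$ together with $E_X(us)=E_X(vs)$, are correct and easy), but they are the routine part. In Stage 3 the phrase ``matching the exponential growth rates'' conceals exactly the estimates that must be produced: (i) the exchange map changes word length and is many-to-one, and the fibre count (which occurrences of $v$ were swapped in) contributes its own exponential, binomial-type factor that does not obviously disappear and must be beaten; (ii) for a general subshift there is no Gibbs property, so neither $\mu[x_{[0,N)}]\approx e^{-hN}$ per cylinder nor a bound of the form ``a positive-measure family of $N$-words has cardinality $\approx e^{hN}$'' comes for free from Shannon--McMillan--Breiman alone --- relating the measure of the image family to a cardinality bound, and cardinality back to $h(X)$, is precisely where maximality of $\mu$ has to be exploited quantitatively; (iii) the alternative ending (``manufacture an invariant measure of strictly larger entropy'') is asserted without any construction. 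So as written the argument would not close.

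For comparison: the actual proof in \cite{Garcia} does proceed by a replacement/counting argument in the spirit of your sketch (their results are stated for containments $E_X(u)\subseteq E_X(v)$, giving an inequality, with equality of extender sets yielding the equality used here), but the work there consists exactly in organizing the replacements and the fibre/overlap bookkeeping so that the parasitic combinatorial factors are controlled, which is the part your proposal defers. In short: right strategy, consistent with the cited source, but the technical core is missing, so this cannot be accepted as a proof of the theorem.
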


\begin{lemma}\label{MME with mu[0]>0}
For any $w \in \{1,2, \dots,k\}^\Z$
and $S \subseteq \Z_{\geq 0}$ there is some measure of maximal entropy $\mu$ on $X_w(S)$ such that $\mu[0]>0$.
\end{lemma}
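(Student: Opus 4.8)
The plan is to argue by contradiction: suppose every measure of maximal entropy $\mu$ on $X_w(S)$ has $\mu[0]=0$, and derive an absurdity. (Remark~\ref{MME exists for shift space} guarantees at least one MME exists.) The case $S$ finite is disposed of at once: if $N:=\max S<\infty$, then $\{1,\dots,k\}^{N+1}$ lies in the forbidden word set of $X_w(S)$, so every point of $X_w(S)$ meets $[0]$ in each window of $N+1$ consecutive coordinates; hence $X_w(S)=\bigcup_{i=0}^{N}\sigma^{-i}[0]$ and shift-invariance gives $\mu[0]\ge \tfrac{1}{N+1}>0$ for \emph{every} $\mu\in\mathcal{M}(X_w(S))$. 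So from now on assume $S$ is infinite.

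If $\mu[0]=0$, then by shift-invariance $\mu(\{x:x_i=0\})=0$ for every $i\in\Z$, so $\mu$ is carried by the $0$-free subshift $Y:=\{x\in X_w(S):x_i\neq 0 \text{ for all } i\}$. Every $0$-free word of $\mathcal{B}(X_w(S))$ is a word over $\{1,\dots,k\}$ that avoids all forbidden words, hence appears in $w$; thus $\mathcal{B}_n(Y)\subseteq\mathcal{B}_n(w)$ and $h(Y)\le h^{\ast}:=\lim_n\tfrac1n\log\phi_w(n)$, the limit existing because $\phi_w$ is submultiplicative (which moreover gives $\phi_w(n)\ge e^{nh^{\ast}}$ for all $n$). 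By the variational principle for the subshift $Y$, $h(X_w(S))=h_\mu\le h(Y)\le h^{\ast}$. So everything reduces to showing that $h(X_w(S))>h^{\ast}$ when $S$ is infinite; this contradicts the inequality just obtained.

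To obtain $h(X_w(S))>h^{\ast}$, I would use the coded structure: $X_w(S)$ is generated by $\mathcal{G}=\{u0:u\text{ appears in }w,\ |u|\in S\}$. Since every generator ends in a $0$ and a $0$ occurs in $X_w(S)$ only as a separator, a word that is a concatenation of generators factors \emph{uniquely} into generators (peel off successive maximal $0$-free prefixes together with the $0$'s following them). Hence, writing $c_n$ for the number of length-$n$ words of $\mathcal{B}(X_w(S))$ that are concatenations of generators (with $c_0=1$), one gets the power-series identity $\sum_{n\ge 0}c_n z^n=(1-G(z))^{-1}$ with $G(z)=\sum_{n\in S}\phi_w(n)z^{n+1}$, all coefficients being nonnegative. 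By Cauchy--Hadamard (and $\phi_w(n)^{1/n}\to e^{h^{\ast}}$ along the infinite set $S$) the radius of convergence of $G$ is $R=e^{-h^{\ast}}$, while $G(R)=\sum_{n\in S}\phi_w(n)e^{-h^{\ast}(n+1)}\ge e^{-h^{\ast}}|S|=\infty$ thanks to $\phi_w(n)\ge e^{nh^{\ast}}$. Since $G$ rises continuously and strictly from $0$ to $\infty$ on $[0,R)$, there is a unique $z_0\in(0,R)$ with $G(z_0)=1$, whence $\sum_n c_n z_0^n=\sum_{j\ge 0}G(z_0)^j=\infty$; so $\sum_n c_n z^n$ has radius of convergence exactly $z_0<R$. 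Therefore $\limsup_n c_n^{1/n}=1/z_0>e^{h^{\ast}}$, and since $c_n\le|\mathcal{B}_n(X_w(S))|$ we conclude $h(X_w(S))\ge\log(1/z_0)>h^{\ast}$, as needed.

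The step I expect to be the main obstacle is precisely this strict inequality $h(X_w(S))>h^{\ast}$: the naive lower bound $h(X_w(S))\ge\frac{\log\phi_w(N)}{N+1}$ coming from concatenating gaps of a single length $N\in S$ can fall below $h^{\ast}$, so the extra entropy must be extracted from the \emph{divergence} of the generating function $G$ at its radius of convergence $e^{-h^{\ast}}$, and it is exactly there that the hypothesis ``$S$ infinite'' is consumed. A smaller but essential point is the uniqueness of the generator factorization: without it one would only obtain $\sum_n c_n z^n\le(1-G(z))^{-1}$, which is the wrong direction for bounding $c_n$ from below.
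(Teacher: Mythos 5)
Your argument is correct, and its skeleton matches the paper's: the finite-$S$ case via the cover $[0],\sigma[0],\dots,\sigma^{\max S}[0]$ is identical, and for infinite $S$ both proofs suppose an MME with $\mu[0]=0$, observe that $\mu$ then lives on a $0$-free subshift whose language sits inside $\mathcal{B}(w)$ (the paper uses the orbit closure $\mathcal{O}$ of $w$; your $Y$ is contained in $\mathcal{O}$, so the two reductions are the same), and reduce everything to the strict inequality $h(X_w(S))>h^{\ast}$, where $h^{\ast}=h(\mathcal{O})=\lim_n\frac1n\log\phi_w(n)$. Where you genuinely diverge is in how that strict inequality is proved. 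The paper argues by elementary counting: fix $t$ with $\log(t!)/t>h(\mathcal{O})$ and $t$ distinct gap lengths in $S$, and count the $(t!)^k\prod_i|\mathcal{B}_{n_i-1}(w)|^k$ words obtained by permuting the gap lengths inside each of $k$ blocks, which gives $h(X_w(S))\ge h(\mathcal{O})+\bigl(\log(t!)-t\,h(\mathcal{O})\bigr)/\bigl(\textstyle\sum_i n_i\bigr)>h(\mathcal{O})$. You instead exploit the coded structure: since $\phi_w(n)\ge e^{nh^{\ast}}$ (Fekete) and $S$ is infinite, the generating function $G(z)=\sum_{n\in S}\phi_w(n)z^{n+1}$ diverges at its radius of convergence $e^{-h^{\ast}}$, so the root $z_0$ of $G(z)=1$ lies strictly inside, and the unique factorization into generators $u0$ (valid because each $u$ is $0$-free, so the $0$'s mark the factor boundaries) yields $h(X_w(S))\ge\log(1/z_0)>h^{\ast}$; both of the points you flag as delicate check out. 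Your route is slightly heavier analytically but has the appeal of producing, as a byproduct, one half of the entropy formula of Theorem \ref{S,w Entropy} (namely $h(X_w(S))\ge\log\lambda$ with $\lambda$ the root of the characteristic equation), whereas the paper's permutation-counting argument is self-contained, purely combinatorial, and needs no generating-function machinery.
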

\begin{proof}
By Remark \ref{MME exists for shift space}, there exists a measure of maximal entropy on $X_w(S)$. If $S$ is finite, then any pair of nearest $0$'s appearing in a point of $X_w(S)$ are separated by a word $u$ appearing in $w$ with $|u|\leq \max S$. 
Then the sets $[0], \sigma[0], \dots, \sigma^{\max S}[0]$ cover $X_w(S)$, so any measure of maximal entropy $\mu$ on $X_w(S)$ satisfies $\mu[0] \geq (1+\max S)^{-1}>0$.

Now consider the case when $S$ is infinite. 
Suppose that $\mu$ is measure of maximal entropy on $X_w(S)$ such that $\mu[0]=0$. Then $\mu$ is supported on the orbit closure of $w$, that is, the shift space $\mathcal{O}$ defined by the language $\mathcal{B}(w)=\bigcup_n \mathcal{B}_n(w)$. 
By the variational principle, $h(\mathcal{O}) \geq h_\mu(\mathcal{O})=h(X_w(S))$. 
Since $\log(t!)/t \to \infty$ as $t \to \infty$, we may choose some positive integer $t$ such that $\log(t!)/t >h(\mathcal{O})$. Since $S$ is infinite, we there are distinct $n_1, \dots, n_t \in \Z_{\geq 0}$ such that $n_1-1, \dots, n_t-1 \in S$. For any $k>0$ the set $\mathcal{B}_{k(n_1+n_2 + \cdots +n_t)}(X_w(S))$ contains all word of the form 
$$(0u_{1,1}0u_{1,2}\dots 0u_{1,t})(0u_{2,1}0u_{2,2}\dots 0u_{2,t})\dots (0u_{k,1}0u_{k,2}\dots 0u_{k,t})$$
where the lengths $|u_{i,1}|,|u_{i,2}|, \dots, |u_{i,t}|$ are distinct elements of $\{n_1-1,n_2-1, \dots, n_t-1\}$ for each $i=1, \dots, k$. So 
$$|\mathcal{B}_{k(n_1+n_2 + \cdots +n_t)}(X_w(S))| \geq (t!)^k |\mathcal{B}_{n_1-1}(w)|^k|\mathcal{B}_{n_2-1}(w)|^k \cdots |\mathcal{B}_{n_t-1}(w)|^k.$$
Since $|\mathcal{B}_n(w)| \geq e^{n h(\mathcal{O})}$ for all values of $n$, we have
\begin{align*}
    \frac{\log |\mathcal{B}_{k(n_1+n_2 + \cdots +n_t)}(X_w(S))|}{k(n_1+n_2 + \cdots +n_t)} 
    &\geq \frac{\log((t!)^k |\mathcal{B}_{n_1-1}(w)|^k \cdots |\mathcal{B}_{n_t-1}(w)|^k)}{k(n_1+n_2 + \cdots +n_t)}
\\
& \geq \frac{k \log(t!)+k(n_1+n_2+ \cdots+ n_t-t)h(\mathcal{O})}{k(n_1+n_2 + \cdots +n_t)}\\
&=\frac{\log(t!)-t\cdot h(\mathcal{O})+(n_1+n_2+ \cdots n_t) h(\mathcal{O})}{n_1+n_2 + \cdots +n_t}.
    \end{align*}
By taking the limit $k \to \infty$,  
$$
h(X_w(S)) 
=\frac{\log(t!)-t\cdot h(\mathcal{O})+(n_1+n_2+ \cdots n_t) h(\mathcal{O})}{n_1+n_2 + \cdots +n_t} 
>h(\mathcal{O})
$$
since $\log(t!)>t \cdot h(\mathcal{O})$.
This is a contradiction to $h(\mathcal{O}) \geq h(X_w(S))$. 
\end{proof}

For any point $w \in \{1,2, \dots, k\}^\Z$
define $\phi_w(n)$ for $n \geq 0$ to be the number of distinct words of length $n$ appearing in $w$. We say $\phi_w$ is the \emph{complexity function of $w$}. Observe that $\phi_w(n)=|\mathcal{B}_n(w)|$.

\EntropyFormula*

\begin{proof}
In any $(S,w)$-gap shift 
$X_w(S)$, we have
\begin{equation}\label{cylinder set of 0}
[ 0 ]=\left(\bigcup_{n \in S} \bigcup_{u \in \mathcal{B}_n(w)} [0 u 0 ]\right) \cup \left(\bigcup_{i \in \Z} [ 0 w_{[i,\infty)}]\right)
\end{equation}
where $[ 0 w_{[i, \infty)}]$ denotes the set of all $x \in X_w(S)$ such that $x_{[0,\infty)}= 0 {w_{[i, \infty)}}$. 
By Lemma \ref{MME with mu[0]>0}, there is some measure of maximal entropy $\mu$ on $X_w(S)$ such that $\mu[0]>0$.
Suppose that for some $i \in \Z$, $\mu[
0 w_{[i,\infty)}] \neq 0$. Since $w \in 
\{1,2, \dots, k\}^\Z$, all $\sigma^{-n}[0
w_{[i,\infty)}]$
for $n \geq 0$ are pairwise disjoint. By the property that $\sigma$ is measure preserving, 
$$
\mu \left(\bigcup_{n \geq 0} \sigma^{-n}[0w_{[i,\infty)}] \right)
=\sum_{n \geq 0} \mu(\sigma^{-n}[0w_{[i,\infty)}])=\sum_{n\geq 0} \mu[0w_{[i,\infty)}],
$$
a contradiction to the fact that $\mu(X_w(S))=1$.
Hence, $\mu[
0
w_{[i,\infty)}]=0$ for all $i \in \Z$ and we have $\mu\left(\bigcup_{i \in \Z} [0w_{[i,\infty)}]\right)=0.$

Since $0$ is a synchronizing word for $X_w(S)$, Lemma \ref{lemma:extender sets} shows that 
$$E_{X_w(S)}(0u0
)=E_{X_w(S)}(0)$$
for all $u \in \mathcal{B}_n(w)$. By
Theorem
\ref{thm:Garcia 3.12}, 
$$\mu[0u0]=\mu[0]e^{h(X_w(S))(-n-1)}.$$
For any pair of words $u \neq v$ appearing in $w$, the cylinder sets $[0u0]$ and $[0v0]$ are disjoint.
By taking the measure of the set in (\ref{cylinder set of 0}),
\begin{align*}
\mu[0]&=\mu \left(\bigcup_{n \in S} \bigcup_{u \in \mathcal{B}_n(w)} [
0u0]\right)  \\
&= \sum_{n \in S} \sum_{u \in \mathcal{B}_n(w)} \mu[0 u0]\\
&= \sum_{n \in S} \phi_w(n) \mu[0]e^{h(X_w(S))(-n-1)}.
\end{align*}
Dividing by $\mu[0]>0$ shows
$$
1=\sum_{n \in S} \phi_w(n) e^{h(X_w(S))(-n-1)}.
$$
Since the complexity function of $w$ is positive, the positive solution to 
$$
1=\sum_{n \in S}\phi_w(n) \lambda^{-(n+1)}
$$
is unique.
\end{proof}

\begin{corollary}
If $w \in 
\{1,2, \dots, k\}^\Z$
has minimal period $p$ and $S \subseteq \Z_{\geq 0}$, then $h(X_w(S))=\log \lambda$ where $\lambda>0$ is the unique solution to 
$$
1= \sum_{n \in S, n <p-1} \phi_w(n) \lambda^{-(n+1)}+ p \sum_{n \in S, n\geq p-1} \lambda^{-(n+1)}.
$$
\end{corollary}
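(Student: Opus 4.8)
The plan is to deduce the corollary directly from Theorem~\ref{S,w Entropy} once the complexity function of a periodic point has been identified. By Theorem~\ref{S,w Entropy}, $h(X_w(S))=\log\lambda$ with $\lambda>0$ the unique positive solution of $1=\sum_{n\in S}\phi_w(n)\lambda^{-(n+1)}$, so the whole statement reduces to the combinatorial claim that $\phi_w(n)=p$ for every $n\ge p-1$. Granting this, one splits the sum over $\{n\in S:n<p-1\}$ and $\{n\in S:n\ge p-1\}$ to obtain exactly the claimed equation, and uniqueness of $\lambda$ is inherited verbatim from Theorem~\ref{S,w Entropy}, the two equations being literally the same.

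To prove the claim I would use a few standard facts about the complexity of a two-sided sequence. Since $w_i=w_{i+p}$ for all $i$, a length-$n$ window $w_{[a,a+n)}$ depends only on $a\bmod p$, so $\phi_w(n)\le p$ for every $n$. Next, $\phi_w$ is non-decreasing (restricting a length-$(n+1)$ factor to its length-$n$ prefix is a surjection $\mathcal{B}_{n+1}(w)\to\mathcal{B}_n(w)$), and in fact \emph{strictly} increasing until it first stabilizes: if every length-$n$ factor of $w$ had a unique one-letter right extension, the same would be forced for length $n+1$, so an equality $\phi_w(n)=\phi_w(n+1)$ would propagate to all larger indices. Finally, the value $c$ at which $\phi_w$ stabilizes is exactly the minimal period $p$: it is $\le p$ by the first remark, and if $\phi_w$ were eventually constant equal to $c$, then each sufficiently long factor of the two-sided sequence $w$ would have a unique right \emph{and} a unique left extension, so the associated graph on length-$n$ factors would be a disjoint union of cycles and $w$ would be periodic with period $\le c$, forcing $p\le c$. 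Putting these together: writing $n_0$ for the index at which $\phi_w$ stabilizes, $\phi_w$ increases strictly from $\phi_w(0)=1$ to $\phi_w(n_0)=p$ in $n_0$ steps of size $\ge 1$, whence $p\ge 1+n_0$, i.e. $n_0\le p-1$, and therefore $\phi_w(n)=p$ for all $n\ge p-1$.

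The one point carrying actual content is the last fact (a Morse--Hedlund-type statement), and that is where I expect the main effort to go. A more hands-on alternative, which I might prefer to write out, is to show directly that the $p$ factors $w_{[i,i+p-1)}$ for $0\le i<p$ are pairwise distinct: an equality $w_{[i,i+p-1)}=w_{[j,j+p-1)}$ with $d=j-i\in\{1,\dots,p-1\}$ forces $w_k=w_{k+d}$ on an interval of length $p-1$, hence, by the global $p$-periodicity of $w$, for all $k$ outside a single residue class modulo $p$; a short cyclic walk along $r,\,r+d,\,r+2d,\dots$ modulo $p$ then closes the remaining class, so $w_k=w_{k+d}$ for all $k$, and thus $w$ has periods $p$ and $d$ and hence period $\gcd(p,d)<p$, contradicting minimality. (One could also invoke the Fine--Wilf theorem at this step.) Once $\phi_w(n)=p$ for $n\ge p-1$ is established, the corollary follows immediately from Theorem~\ref{S,w Entropy}.
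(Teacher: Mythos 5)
Your proposal is correct, and its top-level structure is the same as the paper's: both reduce the corollary to the claim that $\phi_w(n)=p$ for all $n\geq p-1$ and then substitute into Theorem \ref{S,w Entropy}, splitting the sum at $p-1$. The difference is in how that complexity claim is established. For $n\geq p$ the two arguments essentially coincide (agreement of two length-$n$ windows with $n\geq p$ plus $p$-periodicity forces $\sigma^i(w)=\sigma^j(w)$, contradicting minimality), though you get this case for free from monotonicity of $\phi_w$ together with the bound $\phi_w(n)\leq p$, rather than rerunning the window argument. The real divergence is at the boundary case $n=p-1$: the paper argues that, by periodicity, the first $p-1$ symbols of a window determine the next symbol, and then lifts equality of windows to equality of shifts of $w$; this determination step is asserted rather than proved. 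Your ``hands-on alternative'' replaces it with a genuine two-period argument: equality $w_{[i,i+p-1)}=w_{[j,j+p-1)}$ gives a local period $d=j-i$ on an interval meeting all but one residue class mod $p$, the arithmetic-progression walk (or Fine--Wilf) closes the remaining class, and then $w$ has period $d<p$, contradicting minimality. This buys a fully rigorous treatment of exactly the point where the paper's proof is weakest, at the cost of a slightly longer argument; your first sketched route via Morse--Hedlund-type stabilization of $\phi_w$ also works but is heavier machinery than the problem needs, and you were right to prefer writing out the direct version.
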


\begin{proof}
It suffices to show that 
$\varphi_{w}(n)=p$ for $n> p-1$. Let $w\in
\{1,2, \dots, k\}^\Z$
have minimal period $p$. 
Consider the case when $n=p-1$. 
Showing that the elements in the set 
$$\mathcal{T} = \{\sigma^{m}_{[0,n-1]}(w)\ |\ 0\leq m \leq p-1\}$$
are distinct is sufficient enough to complete the proof. Here we denote $\sigma^{m}_{[0,n-1]}(w)$ as the shift map on the cylinder set $w$ from the coordinates $0$ to $n-1$. Now suppose, for contradiction, that the elements in the set $\mathcal{T}$ were not all distinct. Since $w$ has period $p$ and if we know the first $p-1$ symbols, then we know what the last symbol must be to satisfy that $w$ has period $p$. This allows us to write
$$\sigma^{i}(w) = \sigma^{j}(w)$$
for some $0\leq i < j \leq p-1$. Taking $\sigma^{-i}$ on both sides yields
$$w = \sigma^{-i}(\sigma^{j}(w))=\sigma^{j-i}(w)$$
which contradicts the fact that $w$ has period $p$. Now consider the case for when $n>p-1$. Once again, we assume the elements in $\mathcal{T}$ are not distinct. We have the following equality
$$\sigma_{[0,n-1]}^{i}(w) = \sigma_{[0,n-1]}^{j}(w)$$
for some $0\leq i < j \leq p-1$. The above equality states that the two words agree with each other on the the first $n$ symbols, so they must agree everywhere with their periodicity infinitely in both directions. This allows us to write the equality as $\sigma^{i}(w) = \sigma^{j}(w)$. We achieve the same contradiction with a similar argument. So, $\varphi_{w}(n)=p$ for $n> p-1$.
\end{proof}

\section*{Acknowledgments} 
We thank Daniel Thompson for all of the guidance throughout this project 
which was completed as part of the Ohio State University's ROMUS program.
We also thank Austin Allen, Katelynn Huneycutt, Michael Lane, Thomas O'Hare, and Samantha Sandberg for all of their helpful comments and Ronnie Pavlov for all of his suggestions and feedback.
We thank the referee for the very thorough reading of our paper and for their comments which have helped improve the paper.

{
}

\end{document}